\definecolor{TITLE}{rgb}{0.0,0.0,1.0}
\definecolor{AUTHOR1}{rgb}{0.00,0.59,0.00}
\definecolor{AUTHOR2}{rgb}{0.50,0.00,1.00}
\definecolor{SECTION}{rgb}{0.50,0.00,1.00}
\definecolor{FOOTTITLE}{rgb}{0.00,0.50,0.75}
\definecolor{THM}{rgb}{0.7,0.3,0.3}
\definecolor{SEC}{rgb}{0.6,0.1,.5}
\newtheorem{theorem}{{\color{THM} Theorem}}[section]
\newtheorem{lemma}[theorem]{{\color{THM}Lemma}}
\newtheorem{proposition}[theorem]{{\color{THM}Proposition}}
\newtheorem{corollary}[theorem]{{\color{THM}Corollary}}
\theoremstyle{definition}
\newtheorem{definition}[theorem]{{\color{THM}Definition\ }}
\newtheorem{example}[theorem]{{\color{THM}Example}}
\newtheorem{remark}[theorem]{{\color{THM}Remark}}
\newcommand{\A}{\mathfrak A}
\newcommand{\B}{\mathcal B}
\newcommand{\D}{\mathcal D}
\newcommand{\M}{\mathfrak X}
\newcommand{\V}{\mathcal V}
\newcommand{\K}{\mathcal L}
\newcommand{\W}{\mathcal W}
\newcommand{\X}{\mathcal M}
\newcommand{\NN}{\mathcal N}
\newcommand{\N}{\mbox{\kh N}}
\newcommand{\bea}{\begin{eqnarray*}}
\newcommand{\eea}{\end{eqnarray*}}
\begin{document}
\noindent \textcolor[rgb]{0.99,0.00,0.00}{}\\[.5in]
\title [Lie derivations on trivial extension algebras]{Lie derivations on trivial extension algebras}
\author[Mokhtari, Moafian, Ebrahimi Vishki]{A.H. Mokhtari$^1$, F. Moafian$^2$ and H.R. Ebrahimi Vishki$^3$}
\address{$^1, ^2$ Department of Pure Mathematics, Ferdowsi University
 of Mashhad, P.O. Box 1159, Mashhad 91775, Iran}
\email{{amirmkh2002@yahoo.com; fahimeh.moafian@yahoo.com}}
\address{$^3$ Department of Pure Mathematics and  Center of Excellence
in Analysis on Algebraic Structures (CEAAS), Ferdowsi University of
Mashhad, P.O. Box 1159, Mashhad 91775, IRAN.}
\email{vishki@um.ac.ir}
\begin{abstract} In this paper we provide some  conditions under which a Lie derivation on a trivial extension algebra is proper, that is, it can be expressed as  a sum of a derivation and a center valued map vanishing at commutators. We then apply our results for  triangular algebras. Some illuminating examples are also included.
\end{abstract}
\subjclass[2010]{16W25, 15A78,  47B47}
\keywords {Derivation, Lie derivation, trivial extension algebra, triangular algebra}
\maketitle
\section{introduction}
Let $\A$ be a unital algebra (over a commutative unital ring ${\bf R}$) and $\M$ be an $\A-$bimodule. A linear  mapping $\D$ from  $\A$ into $\M$ is said to be a derivation if
\[\D(ab)=\D(a)b+a\D(b)\qquad (a, b\in \A).\]
A linear  mapping $\K:\A\rightarrow\M$ is called a Lie derivation if
\[\K[a,b]=[\K(a),b]+[a,\K(b)]\qquad (a, b\in \A),\]
where $[\cdot,\cdot]$ stands for the Lie bracket.
Trivially every  derivation is a Lie derivation. If $\D:\A\rightarrow\A$ is a derivation and
$\ell:\A\rightarrow Z(\A)(:={\rm the\ center\ of}\ \A)$ is a linear map, then $\D+\ell$ is a Lie derivation if and only if $\ell([a,b])=0,$ for all $a,b\in\A.$  Lie derivations of this form are called proper Lie derivations.
A problem that we are dealing with is studying those conditions on an algebra such that every Lie derivation on it is
proper. We say that an algebra $\A$ has Lie derivation property if every Lie derivation on  $\A$  is proper.

Martindale \cite{Ma} was the first one who showed that every Lie derivation on certain primitive ring is proper.
Cheung \cite{C1} initiated the study of various mappings on triangular algebras; in particular, he  investigated
the properness of Lie derivations on triangular algebras (see also \cite{C2, JQ, MoE}). Cheung's results \cite{C2} have recently extended by Du and Wang \cite{DW} for a generalized matrix algebras.  Wang \cite{W} studied Lie $n-$derivations on a unital algebra with a nontrivial idempotent. Lie triple derivations on a unital algebra with a nontrivial idempotent have recently investigated by Benkovi\v{c} \cite{B}.

In this paper we study Lie derivations on a trivial extension algebra. Let $\M$ be an $\A-$bimodule, then the direct product $\A\times\M$ together with the pairwise addition, scalar product and the algebra multiplication defined by
\[(a,x)(b,y)=(ab,ay+xb)\qquad (a, b\in\A,   x, y\in\M),\]
is a unital algebra  which is called a  trivial extension of $\A$ by $\M$ and will be denoted by $\A\ltimes\M$. For example, every triangular algebra ${\rm Tri}(\mathcal{A}, \M, \B)$ is a trivial extension algebra. Indeed, it can be identified with the trivial extension algebra $(\mathcal{A}\oplus\B)\ltimes\M;$ (see Sec. 3).

Trivial extension algebras are known as a rich source of (counter-)examples in various situations in
functional analysis. Some aspects of  (Banach) algebras of this type  have been investigated in \cite{BDL} and \cite{Z}. Derivations into  various duals of a trivial extension (Banach)
algebra studied in \cite{Z}. Jordan higher derivations on a trivial extension algebra are discussed in \cite{Mo} (see also \cite{G} and \cite{EV}).

The main aim of this paper is providing some conditions under which a trivial extension algebra has the Lie derivation property. We are mainly dealing with those  $\A\ltimes\M$ for which $\A$ enjoys a nontrivial idempotent $p$ satisfying
\begin{equation}\tag{$\bigstar$}
pxq=x,
\end{equation}
 for all $x\in\M,$ where $q=1-p.$ The triangular algebra  is the main example of a trivial extension algebra satisfying $(\bigstar)$.

 In Section 2, we characterize the properness of a Lie derivation on $\A\ltimes\M$ (Theorem \ref{0l}), from which we derive Theorem \ref{main1}, providing some sufficient conditions ensuring  the  Lie derivation property for $\A\ltimes\M$.
 In Section 3,  we apply our results for a triangular algebra, recovering the main results of   \cite{C2}.
\section{Proper Lie derivations on $\A \ltimes \M$}\label{2}
We commence  with the following elementary  lemma  describing  the structures of derivations and Lie derivations on a trivial extension algebra $\A\ltimes\M$.
\begin{lemma}\label{a}
Let $\A$ be a unital algebra and $\M$ be an $\A-$bimodule. Then every linear map
$\K:\A\ltimes \M\longrightarrow \A\ltimes \M$ has the presentation
\begin{eqnarray}\label{1}
\K(a,x)=(\K_\A(a)+T(x),\K_\M(a)+S(x))\qquad (a\in \A, x\in \M),
\end{eqnarray}
for some linear mappings  $\K_\A:\A\longrightarrow \A$, $\K_\M:\A\longrightarrow \M$, $T:\M\longrightarrow \A$ and
$S:\M\longrightarrow\M.$ Moreover,

$\bullet$ $\K$ is a Lie derivation if and only if
\begin{enumerate}[\hspace{1em}\rm (a)]
\item $\K_\A$ and $\K_\M$ are Lie derivations;
\item  $T([a, x])=[a, T(x)]$ {\rm and} $[T(x), y]=[T(y), x]$;
\item  $S([a,x])=[\K_{\A}(a),x]+[a,S(x)]$,
\end{enumerate}
for all $a\in\A, x, y\in\M.$

$\bullet$ $\K$ is a derivation if and only if
\begin{enumerate}[\hspace{1em}\rm (i)]
\item  $\K_\A$ and $\K_\M$ are  derivations;
\item $T(ax)=aT(x)$, $T(xa)=T(x)a$ {\rm and} $xT(y)+T(x)y=0$;
\item $S(ax)=aS(x)+\K_\A(a)x$ {\rm and} $S(xa)=S(x)a+x\K_\A(a)$,
\end{enumerate}
for all $a\in\A, x, y\in\M$.
\end{lemma}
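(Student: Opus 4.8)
The plan is to write an arbitrary linear map $\K:\A\ltimes\M\to\A\ltimes\M$ in the component form \eqref{1} and then translate the defining identities into statements about the four pieces $\K_\A,\K_\M,T,S$. First I would establish the presentation: if $e=(1,0)$ and $f=(0,\M)$ in the sense that $\M$ sits inside $\A\ltimes\M$ as $\{0\}\times\M$, then for $(a,x)\in\A\ltimes\M$ we have $(a,x)=(a,0)+(0,x)$, and by linearity $\K(a,x)=\K(a,0)+\K(0,x)$. Writing $\K(a,0)=(\K_\A(a),\K_\M(a))$ and $\K(0,x)=(T(x),S(x))$ defines the four maps, each linear because $\K$ is and the projections $\A\ltimes\M\to\A$, $\A\ltimes\M\to\M$ are linear; this yields \eqref{1}.

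Next, for the Lie-derivation characterization I would simply compute $\K[(a,x),(b,y)]$ and $[\K(a,x),(b,y)]+[(a,x),\K(b,y)]$ using the multiplication rule $(a,x)(b,y)=(ab,ay+xb)$, hence $[(a,x),(b,y)]=([a,b],ay+xb-by-xa)=([a,b],[a,y]+[x,b])$ where the bracket $[a,y]$ means $ay-ya$ in the bimodule sense. Comparing the $\A$-components gives $\K_\A[a,b]+T([a,y]+[x,b])=[\K_\A(a),b]+[\K_\A(b)? ...]$ — more precisely the $\A$-component of the Lie-derivation identity, specialized to $x=y=0$, yields that $\K_\A$ is a Lie derivation; specialized to $a=b=0$ it yields $T([x,b])$-type relations forcing, together with the symmetry coming from antisymmetry of the bracket, both $T([a,x])=[a,T(x)]$ and $[T(x),y]=[T(y),x]$; and the mixed terms are absorbed. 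The $\M$-component, specialized appropriately, gives that $\K_\M$ is a Lie derivation and gives identity (c). Conversely, assuming (a)–(c) one reassembles the computation and checks the Lie-derivation identity holds for all pairs; this is a direct substitution. The derivation case is entirely parallel, computing $\K((a,x)(b,y))$ versus $\K(a,x)(b,y)+(a,x)\K(b,y)$ and matching components, where now one gets the stronger module identities (ii) and (iii) in place of their bracketed Lie analogues, and $\K_\A,\K_\M$ become genuine derivations.

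The only mildly delicate point — and the step I would be most careful about — is the bookkeeping in the $\A$-component of the Lie identity: the term $T([a,y]+[x,b])$ must split correctly, and one has to invoke the specialization $a=0$ (or $b=0$) to isolate $T([x,b])=[T(x),b]$ cleanly, and then the case $a=b=0$ to extract the "cocycle-type" symmetry $[T(x),y]=[T(y),x]$, after which one verifies these two suffice to recover the full mixed identity by linearity and the Jacobi/antisymmetry relations. Everything else is routine expansion. I would present the forward direction by specialization and the backward direction by direct verification, and remark that the derivation case follows by the same pattern with brackets replaced by one-sided module actions.
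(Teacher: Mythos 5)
Your proposal is correct and is exactly the direct componentwise computation that the paper has in mind (the paper states Lemma \ref{a} without proof, calling it elementary, so there is no alternative argument to compare against). One small bookkeeping slip: the symmetry $[T(x),y]=[T(y),x]$ takes values in $\M$, so the specialization $a=b=0$ that produces it must be applied to the $\M$-component of the Lie identity (where it reads $0=[T(x),y]+[x,T(y)]$), not to the $\A$-component as your last paragraph suggests; with that relocation the forward direction by specialization and the converse by direct substitution both go through.
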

It can be simply verified that the center $Z(\A\ltimes \M)$ of  $\A\ltimes \M$ is
\begin{eqnarray*}
Z(\A\ltimes \M)&=&\{(a,x); a\in Z(\A), [b,x]=0=[a,y]\ {\rm for\ all}\ b\in\A, y\in\M\}\\
&=&\pi_{\A}(Z(\A\ltimes \M))\times\pi_{\M}(Z(\A\ltimes \M)),
\end{eqnarray*}
where $\pi_{\A}:\A\ltimes \M\longrightarrow \A$ and $\pi_{\M}:\A\ltimes \M\longrightarrow \M$ are the natural
projections given by $\pi_{\A}(a,x)=a$ and $\pi_{\M}(a,x)=x$, respectively.

It should be noticed that, if $\A\ltimes \M$ satisfies $(\bigstar)$, then the equality $[p,x]=0$ implies  $x=0$, for any $x\in\M.$ This leads to   $\pi_{\M}(Z(\A\ltimes \M))=\{0\},$ and so
\begin{eqnarray}\label{center}
Z(\A\ltimes \M)&=&\{(a,0); a\in Z(\A), [a,x]=0\ {\rm for\ all}\ x\in \M \}\\
&=&\pi_{\A}(Z(\A\ltimes \M))\times\{0\}.\notag
\end{eqnarray}
Further, the property $(\bigstar)$ also implies the following simplifications on the module operations which will be frequently used in the sequel.
\begin{equation}\label{simple}
qx=0=xp,\ px=x=xq,\ ax=papx\ {\rm and}\  xa=xqaq\qquad (a\in\A, x\in\M).
\end{equation}

The following characterization theorem which is a generalization of  \cite[Theorem 6]{C2} studies  the properness of a Lie derivation on $\A\ltimes\M.$ Before proceeding, we recall that an $\A-$bimodule $\M$ is called $2-$torsion free if $2x=0$ implies $x=0$, for any $x\in\M.$
\begin{theorem}\label{0l}
Suppose that the trivial extension algebra $\A\ltimes \M$ satisfies $(\bigstar)$ and that  both $\A$ and $\M$ are $2-$torsion free. Then a Lie derivation $\K$  on $\A\ltimes \M$ of the form
\[\K(a,x)=(\K_\A(a)+T(x),\K_\M(a)+S(x))\qquad (a\in\A, x\in\M),\]
 is proper if and only if there exists a linear map $\ell_\A:\A\rightarrow Z(\A)$ satisfying the following conditions:
\begin{enumerate}[\hspace{1em}\rm (i)]
\item  $\K_\A-\ell_\A$ is a derivation on $\A$.
\item $[\ell_\A(pap), x]=0=[\ell_\A(qaq), x]$  for all $a\in\A, x\in\M.$
\end{enumerate}
\end{theorem}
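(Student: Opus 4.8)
The plan is to split the biconditional and to exploit the structure lemma (Lemma \ref{a}) to translate ``properness'' into conditions on the four component maps $\K_\A, \K_\M, T, S$, keeping track of how $(\bigstar)$ forces $Z(\A\ltimes\M)=\pi_\A(Z(\A\ltimes\M))\times\{0\}$ via \eqref{center}, and how \eqref{simple} simplifies the module actions. For the \emph{sufficiency} direction, suppose $\ell_\A$ exists with (i)--(ii). Using the decompositions $a=pap+paq+qap+qaq$ (note $\M$ only ``sees'' the $paq$-part of $\A$ by $(\bigstar)$), I would first record that by (ii) the map $a\mapsto(\ell_\A(pap)+\ell_\A(qaq),0)$ lands in $Z(\A\ltimes\M)$; then I would define a candidate center-valued map $\ell:\A\ltimes\M\to Z(\A\ltimes\M)$ by $\ell(a,x)=(\ell_\A(pap)+\ell_\A(qaq),0)$ (or, more precisely, the $Z(\A)$-component of $\ell_\A$ restricted suitably — the exact form is dictated by wanting $\K-\ell$ to be a genuine derivation). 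The task is then to verify that $\K-\ell$ satisfies the derivation conditions (i)--(iii) of Lemma \ref{a}. Condition (i) for the $\A$-part is exactly hypothesis (i) here (and $\K_\M$ being a derivation will need to be extracted — this is where $2$-torsion freeness and the idempotent enter, by applying the Lie-derivation identities to pairs built from $p$, $q$, and general elements, as in Cheung's argument). Conditions (ii) and (iii) concerning $T$ and $S$ should follow by feeding $(p,0)$, $(q,0)$ into the Lie-derivation identities (b), (c) of Lemma \ref{a} and using \eqref{simple}; for instance $[p,T(x)]=T([p,x])=T(x)$ forces $T(x)=pT(x)q+qT(x)p$, and a second application pins down which half survives.

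For the \emph{necessity} direction, assume $\K=\D+\ell$ with $\D$ a derivation on $\A\ltimes\M$ and $\ell:\A\ltimes\M\to Z(\A\ltimes\M)$ linear with $\ell([\,\cdot\,,\cdot\,])=0$. Write $\D$ in the form \eqref{1} with components $\D_\A,\D_\M,T_\D,S_\D$, and write $\ell(a,x)=(\ell_\A(a,x),0)$ using \eqref{center}. Comparing $\A$-components in $\K=\D+\ell$ gives $\K_\A(a)=\D_\A(a)+\ell_\A(a,0)$ and $T(x)=T_\D(x)+\ell_\A(0,x)$. Define $\ell_\A:\A\to Z(\A)$ by $\ell_\A(a):=\ell_\A(a,0)$ (the $\M$-input piece is handled separately, or shown to be irrelevant); then $\K_\A-\ell_\A=\D_\A$ is a derivation on $\A$ by part (i) of Lemma \ref{a}, giving condition (i). For condition (ii), I would use that $\ell\circ$(commutator)$=0$: compute $[(pap,0),(0,x)]$ and $[(qaq,0),(0,x)]$ inside $\A\ltimes\M$ — by \eqref{simple} these brackets have $\M$-component $pap\cdot x - x\cdot pap = papx - 0$ etc., so that applying $\ell$ and matching components, together with the fact that $(\ell_\A(b),0)\in Z(\A\ltimes\M)$ is equivalent to $[\ell_\A(b),x]=0$ for all $x$, yields $[\ell_\A(pap),x]=0=[\ell_\A(qaq),x]$. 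One must be slightly careful that $\ell_\A$ as extracted indeed maps into $Z(\A)$ and not merely into $\A$: this again comes from $(\ell(a,0))\in Z(\A\ltimes\M)$ and \eqref{center}.

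The main obstacle I anticipate is the bookkeeping in the sufficiency direction: given only that $\K_\A-\ell_\A$ is a derivation and the $T,S$ identities from Lemma \ref{a}(b),(c), one must manufacture the correct center-valued correction on \emph{all} of $\A\ltimes\M$ — in particular one must show the ``off-diagonal'' components $\K_\A(paq),\K_\A(qap)$ behave derivation-like and that $\K_\M$ splits correctly — and then check the derivation conditions (i)--(iii) of Lemma \ref{a} for $\K-\ell$ without circularity. The delicate points are: (1) showing $\K_\M$ is forced to be a derivation (not merely a Lie derivation) after the correction, which uses $2$-torsion freeness of $\M$ and the Peirce-type expansion $1=p+q$; and (2) verifying the ``vanishing at commutators'' property of the new $\ell$ reduces exactly to hypothesis (ii), using that $[\A,\M]$ is controlled by $[\,pap,\M\,]$ and $[\,qaq,\M\,]$ via \eqref{simple} since $px=x=xq$. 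Once these are in place, conditions (i)--(iii) of Lemma \ref{a} for $\K-\ell$ are routine verifications.
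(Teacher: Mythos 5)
Your overall strategy coincides with the paper's: split the equivalence, use Lemma \ref{a} to reduce properness to conditions on the components $\K_\A,\K_\M,T,S$, take $\ell(a,x)=(\ell_\A(\,\cdot\,),0)$ as the central part, and verify the derivation conditions for the remainder. Your necessity direction is essentially the paper's argument and is complete: $\ell(a,0)\in Z(\A\ltimes\M)$ together with \eqref{center} already gives both that $\ell_\A:=\pi_\A\circ\ell(\,\cdot\,,0)$ lands in $Z(\A)$ and that $[\ell_\A(a),x]=0$ for every $a\in\A$, $x\in\M$ (your detour through applying $\ell$ to commutators is unnecessary but harmless), and $\K_\A-\ell_\A=\D_\A$ is a derivation by Lemma \ref{a}.

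The sufficiency direction, however, has two concrete gaps. First, with your candidate $\ell(a,x)=(\ell_\A(pap)+\ell_\A(qaq),0)$ the $\A$-component of $\K-\ell$ is $\K_\A(a)-\ell_\A(pap)-\ell_\A(qaq)+T(x)$, which matches the hypothesis ``$\K_\A-\ell_\A$ is a derivation'' only if $\ell_\A(paq)=\ell_\A(qap)=0$. This is true, but for a reason you never state: $\ell_\A=\K_\A-(\K_\A-\ell_\A)$ is a center-valued difference of a Lie derivation and a derivation, hence vanishes on commutators, and $paq=[pa,q]$, $qap=[qa,p]$. (The same observation is what makes $(\ell_\A(a),0)$ central for arbitrary $a$, since hypothesis (ii) only speaks of $pap$ and $qaq$.) Second, and more substantially, you flag but do not carry out the verification that $\K_\M$ is a genuine derivation; this is the technical heart of the paper's proof. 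One sets $\phi(a)=\K_\M(paq)$ and checks it is a derivation by expanding $pabq=[pa,pbq]+[paq,bq]$; one gets $\K_\M(qap)=0$ from $qap=[qap,p]$ and $2$-torsion freeness of $\M$; and from $\K_\M([pap,qaq])=0$, after substituting $a\mapsto qaq+p$ and $a\mapsto pap+q$, one obtains $p\K_\M(qaq)q=-\K_\M(p)a$ and $p\K_\M(pap)q=a\K_\M(p)$, whence $\K_\M(a)=a\K_\M(p)-\K_\M(p)a+\phi(a)$ is a derivation. Without this, and without the analogous (easier) computations giving $T(ax)=aT(x)$, $T(xa)=T(x)a$ and $xT(y)+T(x)y=0$ from $T(x)=[p,T(x)]$ and $T([qap,x])=0$, the ``if'' half is an outline rather than a proof. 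One small slip: $T(x)=[p,T(x)]$ forces $T(x)=pT(x)q-qT(x)p$ (not a sum), and a second application plus $2$-torsion freeness of $\A$ yields $qT(x)p=0$, so that $T(x)=pT(x)q$.
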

\begin{proof}
By Lemma \ref{a} every Lie derivation on  $\A\ltimes \M$ can be expressed in the from
\[\K(a,x)=(\K_{\A}(a)+T(x),\K_{\M}(a)+S(x)),\]
where $\K_{\A}:\A\longrightarrow\A$, $\K_{\M}:\A\longrightarrow\M$ are Lie derivations and $T:\M\longrightarrow\A$, $S:\M\longrightarrow\M$ are linear mappings satisfying
\[T([a,x])=[a,T(x)],\quad  [T(x), y]=[T(y), x]\quad  {\rm and}\quad  S([a,x])=[\K_{\A}(a),x]+[a,S(x)],\]
 for  all  $a\in\A, x,y\in\M.$

To prove  ``if" part, we set
\[\D(a,x)=\big((\K_\A-\ell_\A)(a)+T(x),\K_\M(a)+S(x)\big)\quad {\rm and}\quad \ell(a,x)=(\ell_\A(a), 0)\ \ (a\in\A, x\in\M).\]
Then clearly  $\K=\D +\ell.$ That   $\ell$ is linear and $\ell(\A\ltimes \M)\subseteq Z(\A\ltimes \M)$ follows trivially from $\ell_\A(\A)\subseteq Z(\A)$ and \eqref{center}.  It remains to show that $\D$ is a derivation on $\A\ltimes\M.$  To do this we use  Lemma \ref{a}. It should be mentioned that in the rest of proof we frequently making use the equalities in \eqref{simple}. First we have,
\begin{eqnarray}\label{ax}
\nonumber S(ax)&=&S([pap,x])\\
\nonumber &=&[\K_\A(pap),x]+[pap,S(x)] \\
\nonumber &=&[(\K_\A-\ell_\A)(pap), x]+[\ell_\A(pap), x]+aS(x)\\
\nonumber &=&[\big((\K_\A-\ell_\A)(p)ap+p(\K_\A-\ell_\A)(a)p+pa(\K_\A-\ell_\A)(p)\big), x]\\
\nonumber &&+[\ell_\A(pap), x]+aS(x)\\
&=&(\K_\A-\ell_\A)(a)x+[\ell_\A(pap), x]+aS(x),
\end{eqnarray}
for all $a\in\A, x\in\M$. Now the condition $[\ell_\A(pap), x]=0$ implies that  $S(xa)=(\K_\A-\ell_\A)(a)x+aS(x)$. With a similar procedure as above, from  $[\ell_\A(qaq), x]=0$ we get $S(xa)=x(\K_\A-\ell_\A)(a)+S(x)a$ for all $a\in\A, x\in\M.$

 From the equality
 \begin{equation*}\label{01}
 T(x)=T([p,x])=[p,T(x)]=pT(x)-T(x)p\quad (x\in\M)
 \end{equation*}
   we arrive at  $yT(x)=0=T(x)y$ and so  $yT(x)+T(y)x=0$ for all $y, x\in\M.$ It also follows that $pT(x)p=0$, $qT(x)q=0$ and $qT(x)p=0$ for all $x\in\M;$ note that $\A$ is $2-$torsion free.

The equality
\begin{equation*}\label{02}
 0=T([qap,x])=[qap,T(x)]=qapT(x)-T(x)qap\quad (a\in\A, x\in\M)
 \end{equation*}
 gives $qapT(x)=T(x)qap$,  for all $a\in\A, x\in\M.$  The latter  relation together with the equality
\begin{equation*}\label{03}
 T(ax)=T[pa,x]=paT(x)-T(x)pa\quad (a\in\A, x\in\M)
 \end{equation*}
 lead us to  $T(ax)=pT(ax)q=paT(x)q=aT(x),$  for all $a\in\A, x\in\M.$ By a similar argument we get $T(xa)=T(x)a$  for all $a\in\A, x\in\M.$

Next, we  set $\phi(a)=\K_{\M}(paq)$, then $\phi$ is a derivation.  Indeed, for each $a, b\in\A,$
\begin{eqnarray*}
\phi(ab)&=&\K_{\M}(pabq)\\
&=&\K_{\M}([pa,pbq])+\K_{\M}([paq,bq])\\
&=&\K_{\M}(pa)pbq-pbq\K_{\M}(pa)+pa\K_{\M}(pbq)-\K_{\M}(pbq)pa\\
&&+\K_{\M}(paq)bq-bq\K_{\M}(paq)+paq\K_{\M}(bq)-\K_{\M}(bq)paq\\
&=&a\phi(b)+\phi(a)b.
\end{eqnarray*}
As $\M$ is $2-$torsion free, the identity
\[\K_{\M}(qap)=\K_{\M}([qap,p])=[\K_{\M}(qap),p]+[qap,\K_{\M}(p)]=-\K_{\M}(qap),\]
implies that $\K_{\M}(qap)=0$ for all $a\in\A$.

As $\K_{\M}([pap,qaq])=0$, for all $a\in\A$, we get,
\begin{equation}\label{k}
\K_{\M}(pap)qaq=-pap\K_{\M}(qaq).
\end{equation}
Substituting $a$ with $qaq+p$ (resp. $pap+q$) in \eqref{k}, leads to   $p\K_{\M}(qaq)q=-\K_{\M}(p)a$ (resp. $p\K_{\M}(pap)q=a\K_{\M}(p)$), for all $a\in\A$.  We use the latter relations to prove  that $\K_\M$ is   the sum of an inner derivation (implemented by $\K_{\M}(p)$) and $\phi$,  and so it  is a derivation. Indeed, for each $a\in\A,$
\begin{eqnarray*}
\K_{\M}(a)&=&\K_{\M}(pap)+\K_{\M}(qaq)+\K_{\M}(paq)\\
&=&p\K_{\M}(pap)q+p\K_{\M}(qaq)q+\phi(a)\\
&=&a\K_{\M}(p)-\K_{\M}(p)a+\phi(a).
\end{eqnarray*}
Now Lemma \ref{a} confirms that $\D$ is a derivation on $\A\ltimes\M,$ and so $\K$ is proper, as claimed.

For the converse, suppose that $\K$ is proper, that is,  $\K=\D+\ell$, where $\D$ is a derivation and $\ell$ is a center valued linear map on $\A\ltimes\M.$ Then, from \eqref{center}, we get  $\ell(\A\ltimes\M)\subseteq \pi_{\A}(Z(\A\ltimes \M))\times\{0\},$ and this  implies that   $\ell$ has the presentation  $\ell(a,x)=(\ell_\A(a),0)$ with $[\ell_A(a), x]=0,$ for all $a\in\A, x\in\M,$ for some linear map  $\ell_\A:\A\longrightarrow Z(\A).$   On the other hand,  $\K-\ell=\D$   is a derivation on $\A\ltimes\M$ and so,  by Lemma \ref{a}, $\K_\A-\ell_\A$ is a derivation on $\A,$ as required.
\end{proof}


Applying Theorem \ref{0l}, we come to the next main  result providing some sufficient conditions ensuring the Lie derivation property for $\A\ltimes\M$. Before proceeding, we introduce an auxiliary subalgebra  $\W_\A$ associated to an algebra $\A.$

For an algebra $\A$, we denote by $\mathcal{W}_\A$ the smallest subalgebra of $\A$ contains all commutators and idempotents. We are especially dealing with those algebras satisfying $\mathcal{W}_\A=\A.$ Some known examples of algebras satisfying $\mathcal{W}_\A=\A$ are:
 the full matrix algebra $\A=M_n(A), n\geq 2, $ where $A$ is a unital algebra, and also every simple unital algebra $\A$ with a nontrivial idempotent.
\begin{theorem}\label{main1}
Suppose that the trivial extension algebra $\A\ltimes \M$ satisfies $(\bigstar)$ and  that both $\A$ and $\M$ are $2-$torsion free. Then $\A\ltimes\M$ has Lie derivation property if the following two conditions are satisfied:
\begin{enumerate}[\hspace{1em}\rm (I)]
\item $\A$ has Lie derivation property.
\item The following two conditions  hold:
\begin{enumerate}[\hspace{1em}\rm (i)]
\item $\W_{p\A p}=p\A p$; or $Z(p\A p)=\pi_{p\A p}(Z(\A\ltimes\M)).$
\item $\W_{q\A q}=q\A q$; or  $Z(q\A q)=\pi_{q\A q}(Z(\A\ltimes\M)).$
\end{enumerate}
\end{enumerate}
\end{theorem}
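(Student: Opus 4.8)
The plan is to reduce everything to Theorem~\ref{0l}. Given an arbitrary Lie derivation $\K$ on $\A\ltimes\M$, I would first apply Lemma~\ref{a} to write $\K(a,x)=(\K_\A(a)+T(x),\K_\M(a)+S(x))$, where $\K_\A$ is a Lie derivation on $\A$, $\K_\M\colon\A\to\M$ a Lie derivation, and $T,S$ the accompanying linear maps subject to (b) and (c) of Lemma~\ref{a}. By hypothesis (I), $\A$ has the Lie derivation property, so there is a linear map $\ell_\A\colon\A\to Z(\A)$, vanishing on commutators, with $\D_\A:=\K_\A-\ell_\A$ a derivation on $\A$; this is precisely condition (i) of Theorem~\ref{0l}. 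Hence the entire task is to verify condition (ii), i.e.\ $[\ell_\A(pap),x]=0=[\ell_\A(qaq),x]$ for all $a\in\A$, $x\in\M$. The two halves are symmetric, so I focus on the first.

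The key is that the compatibility relation (c) of Lemma~\ref{a} constrains $\ell_\A$ on $p\A p$. Denote by $L_a$, $R_a$ the operators $x\mapsto ax$, $x\mapsto xa$ on $\M$, and by $\mu_z$ the operator $x\mapsto[z,x]$. Exactly as in the proof of Theorem~\ref{0l}, one has $S(ax)=\D_\A(a)x+[\ell_\A(pap),x]+aS(x)$ (see \eqref{ax}), which in operator form says $[S,L_a]-L_{\D_\A(a)}=\mu_{\ell_\A(pap)}$. Restricting to $u\in p\A p$ and putting $\Phi(u):=\mu_{\ell_\A(u)}$, I would then check: (1) $\Phi$ is a derivation of $p\A p$ into $\mathrm{End}(\M)$, where $\mathrm{End}(\M)$ is a unital $p\A p$-bimodule via the homomorphism $u\mapsto L_u$ --- this follows because $L$ is multiplicative, $\D_\A$ is a derivation, and $\mathrm{ad}_S$ is a derivation of $\mathrm{End}(\M)$; (2) $\Phi$ kills every commutator of $p\A p$, because $\ell_\A$ does; (3) $\Phi$ kills every idempotent $e\in p\A p$: the derivation identity $\Phi(e)=\Phi(e^2)=\Phi(e)L_e+L_e\Phi(e)$ forces $\Phi(e)$ to be off-diagonal with respect to the idempotent $L_e\in\mathrm{End}(\M)$, whereas $\Phi(e)=L_{\ell_\A(e)}-R_{\ell_\A(e)}$ commutes with $L_e$ (since $\ell_\A(e)\in Z(\A)$), and together these give $\Phi(e)=0$ (using that $\M$ is $2$-torsion free). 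So $\Phi$ is a derivation annihilating all commutators and all idempotents of $p\A p$.

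To conclude, I would split on hypothesis (II)(i). If $\W_{p\A p}=p\A p$, then $\Phi$, a derivation vanishing on a generating set of $p\A p$, vanishes identically, i.e.\ $[\ell_\A(pap),x]=0$ for all $a,x$. If instead $Z(p\A p)=\pi_{p\A p}(Z(\A\ltimes\M))$, I would argue directly: since $\ell_\A(pap)\in Z(\A)$ we have $p\,\ell_\A(pap)\,p\in Z(p\A p)=\pi_{p\A p}(Z(\A\ltimes\M))$, and a computation with the simplifications \eqref{simple} and the description \eqref{center} of $Z(\A\ltimes\M)$ --- after, if necessary, replacing $\ell_\A$ by $\ell_\A$ minus a derivation $\A\to Z(\A)$, which leaves condition (i) untouched --- then yields $\ell_\A(pap)\in\pi_\A(Z(\A\ltimes\M))$, so that $[\ell_\A(pap),x]=0$ once more. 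The other half $[\ell_\A(qaq),x]=0$ is obtained in the same manner, using right multiplications $R_v$ ($v\in q\A q$) in place of left multiplications and hypothesis (II)(ii). With both conditions of Theorem~\ref{0l} verified, $\K$ is proper; as $\K$ was arbitrary, $\A\ltimes\M$ has the Lie derivation property.

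The step I expect to be the genuine obstacle is the middle one. The central trace $\ell_\A$ furnished by (I) need not a priori be compatible with the module; the whole point is that the existence of the operator $S$ --- a consequence of $\K$ being a Lie derivation of $\A\ltimes\M$ and not merely of $\A$ --- forces such compatibility on the corners $p\A p$ and $q\A q$. Recognizing $[S,L_\bullet]-L_{\D_\A(\bullet)}$ as a derivation of $p\A p$ that annihilates commutators and idempotents is the heart of the matter; the dichotomy in (II) is then exactly what is needed to upgrade this either to $\Phi\equiv0$ or, via the center of $\A\ltimes\M$, to condition (ii) of Theorem~\ref{0l}.
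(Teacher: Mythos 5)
Your proposal is correct and follows essentially the same route as the paper: both reduce to Theorem \ref{0l} via hypothesis (I) and then show that $\{u\in p\A p:\ [\ell_\A(u),x]=0 \text{ for all } x\in\M\}$ is a subalgebra containing all commutators and idempotents by exploiting the relation \eqref{ax}, your formulation of this set as the kernel of the derivation $u\mapsto\mu_{\ell_\A(u)}$ of $p\A p$ into $\mathrm{End}(\M)$ being exactly the paper's identity \eqref{sub} in operator dress, and your idempotent step (off-diagonality of $\Phi(e)$ with respect to $L_e$ plus commutation with $L_e$) an equivalent variant of the paper's $3e^2-2e^3$ computation. The one place you are no more explicit than the paper is the alternative branch $Z(p\A p)=\pi_{p\A p}(Z(\A\ltimes\M))$, which both of you dispatch with an unspelled ``computation.''
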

\begin{proof} Let $\K$ be a Lie derivation on $\A\ltimes\M$ with the presentation
 as given in Lemma \ref{a}. Since $\K_\A$ is a Lie derivation  and  $\A$ has Lie derivation property,  there exists a linear map $\ell_\A:\A\rightarrow Z(\A)$ such that  $\K_\A-\ell_\A$ is a derivation on $\A$ (and so $\ell_\A$ vanishes on commutators of $\A$). It is enough to show that, under either conditions of  (II), $\ell_\A$ satisfies Theorem \ref{0l}(ii); that is,  $[\ell_\A(pap), x]=0=[\ell_\A(qaq), x]$  for all $a\in\A, x\in\M.$

 To prove the conclusion, we consider the subset  $\A'=\{pap: [\ell_\A(pap),x]=0,\ {\rm for\ all}\  x\in\M\}$ of $p\A p.$
 We are going to show that  $\A'$ is a subalgebra of $p\A p$ including  all idempotents and commutators of  $p\A p.$ First, we shall prove that $\A'$ is a subalgebra. That $\A'$ is an $\bf R-$sbmodule of $\A$ follows from the linearity of $\ell_\A.$ The following identity confirms that  $\A'$ is closed under multiplication.
 \begin{equation}\label{sub}
 [\ell_\A(papbp),x]=[\ell_\A(pap),bx]+[\ell_\A(pbp),ax]\qquad (a,b\in\A, x\in\M).
 \end{equation}
To prove \eqref{sub}, note that from the identity \eqref{ax} we have
\begin{equation}\label{sub1}
S(ax)= (\K_\A-\ell_\A)(a) x+[\ell_\A(pap), x]+aS(x)\quad (a\in\A, x\in\M).
\end{equation}
Applying \eqref{sub1} for $ab$ we have,
\begin{equation}\label{sub2}
S(abx)= (\K_\A-\ell_\A)(ab) x+[\ell_\A(pabp), x]+abS(x).
\end{equation}
On the other hand, since  $a[\ell_\A(pbp), x]=[\ell_\A(pbp), ax]$, we have,
\begin{eqnarray*}\label{sub3}
S(abx)&=& (\K_\A-\ell_\A)(a)bx+[\ell_\A(pap), bx]+aS(bx)\\
&=& (\K_\A-\ell_\A)(a)bx+[\ell_\A(pap), bx]+a (\K_\A-\ell_\A)(b) x+[\ell_\A(pbp), ax]+abS(x).\\
\end{eqnarray*}
  Using the fact that $\K_\A-\ell_\A$ is a derivation, then a comparison of   the latter equation and   \eqref{sub2} leads to
\[[\ell_\A(pabp), x]=[\ell_\A(pap), bx]+[\ell_\A(pbp), ax];\]
for all $a,b\in\A, x\in\M,$ which trivially implies \eqref{sub}.

Next, we claim that $\A'$ contains all idempotents of $p\A p.$ First note that, if one puts  $a=b$ in \eqref{sub}, then
 \begin{equation}\label{sub4}
 [\ell_\A((pap)^2),x]=[\ell_\A(pap),2ax]\qquad (a\in\A, x\in\M).
 \end{equation}
 This follows that
\begin{equation}\label{sub5}
 [\ell_\A((pap)^3),x]=[\ell_\A((pap)^2(pap)),x]=[\ell_\A(pap),3a^2x]\qquad (a\in\A, x\in\M).
 \end{equation}
 Suppose that $pap\in p\A p$ is an idempotent, that is, $(pap)^2=pap.$
 By \eqref{sub4} and \eqref{sub5}, we arrive at
 \begin{eqnarray*}
 [\ell_\A(pap),x]=[\ell_\A(3(pap)^2-2(pap)^3),x]&=&3[\ell_\A(pap),2ax]-2[\ell_\A(pap),3a^2x]\\
 &=&[\ell_\A(pap),\big(6(pap)-6(pap)^2\big)x]=0;
 \end{eqnarray*}
and this says that the idempotent $pap$ lies in $\A'.$

Further, that $\A'$ contains all commutatorts follows trivially from the fact that $\ell_\A$ vanishes on commutators.
We thus have proved that $\A'$ is a subalgebra of $p\A p$ contains all idempotents and commutators. Now the assumption
$\W_{p\A p}=p\A p$ in (i) gives  $\A'=p\A p$, that is, $[\ell_\A(pap),x]=0$ for every $a\in\A, x\in\M.$  A similar argument shows that, if   $\W_{q\A q}=q\A q$, then   $[\ell_\A(qaq),x]=0$ for every $a\in\A, x\in\M.$

Next, as $\ell_\A(p\A p)\subseteq Z(p\A p)$, the assumptions $Z(p\A p)=\pi_{p\A p}(Z(\A\ltimes\M))$ implies that    $\ell_\A(p\A p)\subseteq \pi_{p\A p}(Z(\A\ltimes\M))$. By \eqref{center}, the latter relation implies the reqirment $[\ell_\A(pap), x]=0$  for all $a\in\A, x\in\M.$ Similarly, the equality $Z(q\A q)=\pi_{q\A q}(Z(\A\ltimes\M))$ gives $[\ell_\A(qaq), x]=0$  for all $a\in\A, x\in\M;$ and this completes the proof.
\end{proof}
As the following example demonstrates, the Lie derivation property of $\A$ in Theorem \ref{main1} is essential.
\begin{example}\label{e1}
Let $\A$ be a unital algebra with a nontrivial idempotent $p$, which does not have   Lie derivation property.  Let  $\K_\A$ be a non-proper Lie derivation on $\A.$ Let $\M$ be an $\A-$bimodule such that $pxq=x$  and $[\K_\A(a),x]=0,$ for all $a\in\A, x\in\M$. Then a direct verification show that $\K(a,x)=(\K_{\mathfrak{A}}(a), 0), (a,x)\in\A\ltimes\M,$ defines a non-proper Lie derivation on $\A\ltimes\M.$ \\
To see a concrete example of a pair $\A$, $\M$ satisfying the aforementioned conditions, let $\A$ be the triangular matrix algebra as given in {\cite[Example 8]{C2}} and let $\M=\Bbb{R}$ equipped with the module operations $x\cdot (a_{ij})=xa_{11}$, $(a_{ij})\cdot x=a_{44}x$,  $(a_{ij})\in\mathfrak{A}$ and $x\in\Bbb{R}$.
\end{example}
The above example and Theorem \ref{main1} confirm  that, the  Lie derivation property of $\A$ plays a key role for the Lie derivation property of $\A\ltimes\M.$ In this respect, Lie derivation property of a unital algebra containing a nontrivial idempotent has already studied by   Benkovi\v{c} \cite[Theorem 5.3]{B} (see also  the case $n=2$ of  a result given  by Wang \cite[Theorem 2.1]{W}). About the Lie derivation property of a unital algebra with a nontrivial idempotent, we quote the following result from the first and third  authors \cite{ME}, which  extended the aforementioned results.
\begin{proposition}{\rm ({\cite[Corollary  4.3]{ME}}).}\label{idempotent}
Let $\A$ be a $2-$torsion free unital algebra with  a nontrivial idempotent $p$ and $q=1-p.$ Then $\A$ has Lie derivation property if the following three conditions hold:
\begin{enumerate}[\hspace{1em}\rm (I)]
\item {\small $Z(q\A q)=Z(\A)q$ and $p\A q$ is a faithful left $p\A p-$module; or $\mathcal{W}_{p\A p}=p\A p$ and $p\A q$ is a faithful left $p\A p-$module; or $p\A p$ has Lie derivation property and $\mathcal{W}_{p\A p}=p\A p.$}
\item {\small $Z(p\A p)=Z(\A)p$ and $q\A p$ is a faithful right $q\A q-$module; or $\mathcal{W}_{q\A q}=q\A q$ and $q\A p$ is a faithful right $q\A q-$module; or $q\A q$ has Lie derivation property and $\mathcal{W}_{q\A q}=q\A q.$}
\item One of the following assertions holds:
\begin{enumerate}[\hspace{1em}\rm (i)]
\item Either $p\A p$ or $q\A q$ does not contain nonzero central ideals.
\item $p\A p$ and $q\A q$ are domain.
\item Either $p\A q$ or $q\A p$ is strongly faithful.
\end{enumerate}
\end{enumerate}
\end{proposition}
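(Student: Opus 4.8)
The plan is to run the Peirce-calculus argument that is by now standard for Lie derivations on an algebra with a nontrivial idempotent (cf. \cite{C2}, \cite{B}, \cite{DW}), keeping careful track of where each of the several alternative hypotheses is used; we indicate only the main lines, a full proof being given in \cite{ME}. Write $p_1=p$, $p_2=q$ and use the Peirce decomposition $\A=\bigoplus_{i,j=1}^{2}p_i\A p_j$, denoting by $a_{ij}$ the component of $a\in\A$ in $p_i\A p_j$. Let $\K$ be a Lie derivation on $\A$. As an inner derivation is a proper Lie derivation and properness is unaffected by the addition of a derivation, one may replace $\K$ by the Lie derivation $\K-\mathrm{ad}_{m}$, where $\mathrm{ad}_m$ is the inner derivation $a\mapsto ma-am$ and $m=\K(p)_{21}-\K(p)_{12}$, and so assume that $\K(p)=\K(p)_{11}+\K(p)_{22}$ is diagonal.

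Next I would substitute $x_{12}\in p\A q$ and $x_{21}\in q\A p$ into $\K[p,x]=[\K(p),x]+[p,\K(x)]$ and compare Peirce components; the $2$-torsion freeness of $\A$ is used to discard the terms landing in the opposite off-diagonal corner, and one obtains $\K(p\A q)\subseteq p\A q$, $\K(q\A p)\subseteq q\A p$ and that $\K(p)$ centralizes both $p\A q$ and $q\A p$. Feeding the commutators $[a_{ii},x_{12}]$ and $[a_{ii},x_{21}]$ (together with $[x_{12},y_{21}]$) into the Lie-derivation identity then shows that the off-diagonal components of $\K(a_{11})$ and $\K(a_{22})$ annihilate $p\A q$ and $q\A p$ from both sides; modifying $\K$ by one more inner derivation if necessary, and invoking the faithfulness clauses contained in (I) and (II) (or, in the remaining branches, a short additional reduction), one concludes that $\K$ respects the diagonal decomposition, i.e. $\K(p\A p)\subseteq p\A p\oplus q\A q$ and $\K(q\A q)\subseteq p\A p\oplus q\A q$.

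Now put $\delta_p(a)=p\K(a)p$ for $a\in p\A p$ and $\delta_q(a)=q\K(a)q$ for $a\in q\A q$. A routine verification (substituting further commutators) shows that $\delta_p$ is a Lie derivation of $p\A p$, that $\delta_q$ is a Lie derivation of $q\A q$, and that $\K$ is a derivation of $\A$ precisely when $\delta_p,\delta_q$ are derivations and a finite list of compatibility identities among the Peirce components of $\K$ holds. Hypotheses (I) and (II) jointly force $\delta_p$ and $\delta_q$ to be proper, say $\delta_p=d_p+\ell_p$ and $\delta_q=d_q+\ell_q$ with $d_p,d_q$ derivations and $\ell_p,\ell_q$ center-valued linear maps vanishing on commutators; and in the branches that use $Z(q\A q)=Z(\A)q$ or $Z(p\A p)=Z(\A)p$ one moreover learns that $\ell_p$ and $\ell_q$ take values in the corner copies of $Z(\A)$. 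Collecting $d_p$, $d_q$, the restrictions $\K|_{p\A q}$ and $\K|_{q\A p}$, and the inner parts removed above, one assembles a derivation $\D$ of $\A$ for which $\ell:=\K-\D$ is diagonal and agrees with $\ell_p$ on $p\A p$ and with $\ell_q$ on $q\A q$.

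It then remains to show that $\ell$ is $Z(\A)$-valued and annihilates every commutator of $\A$, and this gluing is the heart --- and the main obstacle --- of the proof. Since $d_p$, $d_q$ and the off-diagonal maps must together constitute a derivation of $\A$, the multiplications $p\A q\cdot q\A p\subseteq p\A p$ and $q\A p\cdot p\A q\subseteq q\A q$ impose a compatibility between $\ell_p$ and $\ell_q$ on these mixed subspaces, and by the second paragraph the value of $\ell$ on a general commutator $[a,b]$ reduces to the values of $\ell_p,\ell_q$ on commutators inside the corners (which already vanish) together with $\ell_p$ on $p\A q\cdot q\A p$ and $\ell_q$ on $q\A p\cdot p\A q$. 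Hypothesis (III) is exactly what kills those residual values: whichever of its three alternatives holds --- one of $p\A p$, $q\A q$ having no nonzero central ideal, both being domains, or one of $p\A q$, $q\A p$ being strongly faithful --- the central elements $\ell_p(x_{12}y_{21})$ and $\ell_q(x_{21}y_{12})$ are forced to vanish, whence $\ell$ is $Z(\A)$-valued and vanishes on all commutators and $\K=\D+\ell$ is proper. I expect the genuinely delicate work to be this last step --- extracting from each branch of (III) just enough rigidity to annihilate $\ell_p$ and $\ell_q$ on the mixed corners --- together with the somewhat intricate case bookkeeping forced by the cross-linkage in the hypotheses (the condition on $Z(q\A q)$ appearing inside (I), and symmetrically).
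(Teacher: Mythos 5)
This proposition is not proved in the paper at all: it is imported verbatim from \cite[Corollary 4.3]{ME}, so there is no in-paper argument to measure your attempt against. Your outline does follow the route that \cite{ME}, \cite{B} and \cite{DW} take --- normalize $\K(p)$ by an inner derivation so that it becomes diagonal, use $2$-torsion freeness to show $\K$ preserves the off-diagonal Peirce corners and that the normalized $\K(p)$ commutes with them, extract corner Lie derivations $\delta_p$ on $p\A p$ and $\delta_q$ on $q\A q$, make them proper, and glue the central parts --- and none of the individual steps you state is mathematically wrong as far as they go.

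The genuine gap is that nothing of substance is actually carried out, and the one external source you lean on for ``a full proof'' is \cite{ME} itself, i.e.\ the reference from which the statement is quoted; as a proof this is circular. Concretely, the two places where the hypotheses must do real work are exactly the places you pass over. First, the three alternatives inside each of (I) and (II) are not interchangeable reformulations of ``the corner has the Lie derivation property'': the branch $Z(q\A q)=Z(\A)q$ together with faithfulness of $p\A q$ produces the center-valued part of $\K$ on $p\A p$ by a transfer-of-center mechanism, whereas the branch $\W_{p\A p}=p\A p$ forces vanishing of the obstruction on a subalgebra generated by idempotents and commutators (compare the argument actually written out in the proof of Theorem \ref{main1} of this paper); your sketch treats all branches as a single assertion that $\delta_p,\delta_q$ ``are forced to be proper.'' Second, you correctly identify the gluing step --- showing $\ell$ is $Z(\A)$-valued and kills the mixed products $\ell_p(x_{12}y_{21})$ and $\ell_q(x_{21}y_{12})$ --- as the heart of the proof, but each alternative of (III) (no nonzero central ideals, domains, strong faithfulness) requires its own separate argument, and none is supplied. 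What you have is an accurate roadmap of the known strategy, not a proof of the proposition.
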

It should also be remarked that   if $p\A q\A p=0$ and $q\A p\A q=0$, then the condition (III) in the above  proposition is superfluous and   can be dropped from the hypotheses, (see also  \cite[Remark 5.4]{B}).

One may  apply Proposition \ref{idempotent} to show that,  the algebra $\A=B(X)$ of bounded operators on a Banach space $X$ with dimension greater than 2, as well as, the full matrix algebra $\A=M_n(A)$, $n\geq 2$, where $A$ is a $2-$torsion free unital algebra, have the Lie derivation property.

Illustrating Theorem \ref{main1} and Proposition \ref{idempotent}, in the following we give an example of a trivial extension algebra having Lie derivation property (which is not a triangular algebra).
\begin{example}
We consider the next subalgebra $\A$  of $M_4(\Bbb{R})$ with the nontrivial idempotent $p$ as follows;
\[\A=\Bigg\{\left(\begin{array}{cccc}
a & 0 & 0 & 0\\
0 & b & u & 0\\
0 & 0 & c & 0\\
0 & 0 & 0 & d\\
\end{array}\right)| a,\, b,\, c,\, d,\, u\in\Bbb{R}\Bigg\},\qquad p=\left(\begin{array}{cccc}
0 & 0 & 0 & 0\\
0 & 0 & 0 & 0\\
0 & 0 & 1 & 0\\
0 & 0 & 0 & 0\\
\end{array}\right).\]
One can directly check that $p\A p\cong\mathbb R$ and $q\A q\cong\mathbb{R}^3$ (where the algebras $\mathbb R$ and ${\mathbb R}^3$ are equipped with their natural pointwise multiplications). In particular, $p\A p$, $q\A q$ have Lie derivation property, $\W_{p\A p}=p\A p$, $\W_{q\A q}=q\A q$ and $p\A p$ does not contain  nonzero central ideals.  Thus  $\A$ has Lie derivation property by virtue of Proposition \ref{idempotent}.\\
Further,  $\M=\Bbb{R}$ ia an  $\A-$bimodule furnished with the module operations as
 \[(a_{ij})\cdot x=a_{33}x,\quad  x\cdot (a_{ij})=xa_{22}\qquad ((a_{ij})\in\A, x\in\Bbb{R}).\] Then clearly the trivial extension algebra $\A\ltimes\Bbb{R}$ satisfies the condition $(\bigstar);$ that is, $pxq=x$ for all $x\in\Bbb{R}$.  So Theorem \ref{main1} guarantees that $\A\ltimes\Bbb{R}$ has Lie derivation property. It is worthwhile mentioning that  $\A\ltimes\Bbb{R}$ is not a triangular algebra. This can be directly verified  that, there is no  nontrivial idempotent $P\in\A\ltimes\Bbb{R}$ such that $P(\A\ltimes\Bbb{R})Q\neq 0$ and $Q(\A\ltimes\Bbb{R})P=0,$ where $Q=1-P$ (see \cite{C1}).
\end{example}
                                 \section{Application to triangular algebras}\label{4}
We recall that a triangular algebra ${\rm Tri}(\mathcal{A}, \M, \B)$ is an algebra of the form
\[{\rm Tri}(\mathcal{A}, \M, \B)=\left\{\left(\begin{array}{cc}
a & x \\
0 & b \\
\end{array}\right)|\hspace*{0.1cm} a\in\mathcal{A},\hspace*{0.1cm} x\in\M, \hspace*{0.1cm}b\in\B\right\},\]
whose algebra operations are just like $2\times 2-$matrix operations;  where $\mathcal{A}$ and $\B$ are unital algebras and $\M$ is an $(\mathcal{A}, \B)-$bimodule; that is, a left $\mathcal A-$module and a right $\B-$module. One can easily check that ${\rm Tri}(\mathcal{A}, \M, \B)$ is isomorphic to the trivial extension algebra $(\mathcal{A}\oplus\B)\ltimes \M$, where the algebra $\mathcal{A} \oplus \B$ has its usual pairwise operations and $\M$ as an $(\mathcal{A}\oplus \B)-$bimodule is equipped with the module operations
\[(a\oplus b)x=ax\quad  {\rm and}\quad x(a\oplus b)=xb
\qquad (a\in\mathcal{A}, b\in\B, x\in\M).\]
Furthermore,  the  triangular algebra
${\rm Tri}(\mathcal A, \M, \B)\cong(\mathcal{A}\oplus\B)\ltimes\M$ satisfies the condition $(\bigstar)$. Indeed,  $p=(1_{\mathcal A},0)$ is a nontrivial idempotent, $q=(0, 1_{\mathcal B})$ and  a direct verification shows that $pxq=x,$ for all $x\in\M.$ Further, in this case for $\A=\mathcal A\oplus\B$ we have,
\[p\A p\cong\mathcal A,\quad p\A q=0,\quad q\A p=0\ {\rm and}\ q\A q\cong\B.\]

It should be mentioned that in this case, for a Lie derivation $\K$  on $(\mathcal{A}\oplus\B)\ltimes\M$ with the presentation
 \[\K(a\oplus b, x)=(\K_{\mathcal{A}\oplus\B}(a\oplus b)+T(x), \ \K_\M(a\oplus b)+S(x))\quad ((a\oplus b)\in\mathcal A\oplus\B, x\in\M),\]
 as given in Lemma \ref{a}, we conclude that $T=0$. Indeed, by Lemma \ref{a}(b),  $T([a\oplus b, x])=[a\oplus b, T(x)]$ for all $a\in\mathcal A, b\in\B, x\in\M.$ Using the latter relation for $a=1, b=0$ implies that $T(x)=0$ for all $x\in\M.$

A quick look at the proof of Theorem \ref{0l} reveals that, in this special case, as $T=0$ and $q\A p=0$, we do not need the $2-$torsion freeness of $\A$ and $\M$ in Theorems  \ref{0l} and \ref{main1}.

A direct verification also reveals that, the direct sum $\A=\mathcal A\oplus\B$  ha Lie derivation property if and only if both $\mathcal A$ and $\B$ have Lie derivation property.

Now, by the above observations, as an immediate consequence  of Theorem \ref{main1}, we  directly arrive at the  following result of Cheung \cite{C2}.
\begin{corollary} [See {\cite[Theorem 11]{C2}}]\label{tri} Let $\mathcal{A}$ and $\B$ be  unital algebras and let $\M$ be  an $(\mathcal{A}, \B)-$bimodule.
Then the triangular algebra $\mathcal{T}={\rm Tri}(\mathcal{A}, \M, \B)$ has Lie derivation property if the following two conditions are satisfied:
\begin{enumerate}[\hspace{1em}\rm (I)]
\item $\mathcal A$ and $\B$ have Lie derivation property.
\item The following two conditions  hold:
\begin{enumerate}[\hspace{1em}\rm (i)]
\item $\W_{\mathcal A}=\mathcal A$; or $Z(\mathcal A)=\pi_{\mathcal A}(Z(\mathcal{T})).$
\item $\W_{\B}=\B$; or  $Z(\B)=\pi_{B}(Z(\mathcal{T})).$
\end{enumerate}
\end{enumerate}
\end{corollary}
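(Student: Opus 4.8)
The plan is to read the triangular algebra $\mathcal T=\mathrm{Tri}(\mathcal A,\M,\B)$ as the trivial extension algebra $\A\ltimes\M$ with $\A=\mathcal A\oplus\B$, exactly as set up in the preliminary observations of this section, and then to check that hypotheses (I)--(II) above are nothing but the hypotheses of Theorem \ref{main1} for this particular $\A\ltimes\M$. The relevant dictionary is already in place: $p=(1_{\mathcal A},0)$ is a nontrivial idempotent with $q=(0,1_{\mathcal B})$, the module operations force $pxq=x$ for all $x\in\M$ so that $(\bigstar)$ holds, and $p\A p\cong\mathcal A$, $q\A q\cong\B$, $p\A q=q\A p=0$. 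Since the Lie derivation property is invariant under algebra isomorphism, it will be enough to establish it for $\A\ltimes\M$.

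First I would dispose of the torsion hypotheses. Applying Lemma \ref{a}(b) with the first coordinate equal to $1_{\mathcal A}$ and the second to $0$ shows that the component $T$ of any Lie derivation of $\A\ltimes\M$ vanishes identically; since moreover $q\A p=0$, the remark made just before the statement tells us that in this situation the $2$-torsion-freeness assumptions of Theorems \ref{0l} and \ref{main1} are not needed, so Theorem \ref{main1} will be available with no restriction on $\mathcal A$, $\M$ or $\B$. Next I would match condition (I): a direct check---which I would record explicitly---shows that $\A=\mathcal A\oplus\B$ has the Lie derivation property precisely when both $\mathcal A$ and $\B$ do, so hypothesis (I) of the corollary is exactly hypothesis (I) of Theorem \ref{main1}.

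Finally I would translate condition (II). Under the isomorphism $p\A p\cong\mathcal A$ one has $Z(p\A p)\cong Z(\mathcal A)$, and $\W_{p\A p}=p\A p$ holds precisely when $\W_{\mathcal A}=\mathcal A$; also the corner projection $\pi_{p\A p}$ of $\A\ltimes\M\cong\mathcal T$ corresponds, under the identification, to the projection $\pi_{\mathcal A}\colon\mathcal T\to\mathcal A$, whence $\pi_{p\A p}(Z(\A\ltimes\M))\cong\pi_{\mathcal A}(Z(\mathcal T))$. Thus (II)(i) of the corollary is (II)(i) of Theorem \ref{main1}, and symmetrically---using $q\A q\cong\B$ and $\pi_{q\A q}\leftrightarrow\pi_{\B}$---(II)(ii) of the corollary is (II)(ii) of Theorem \ref{main1}. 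At that point Theorem \ref{main1} immediately gives that $\A\ltimes\M\cong\mathcal T$ has the Lie derivation property. I do not expect a genuine obstacle here, the whole argument being a translation; the step most likely to call for a little care is verifying that the corner projection $\pi_{p\A p}$ really does correspond to Cheung's $\pi_{\mathcal A}$ on centers (and likewise $\pi_{q\A q}\leftrightarrow\pi_{\B}$), together with the direct-sum fact for the Lie derivation property of $\mathcal A\oplus\B$ ---- both routine, and in fact already noted in the run-up to the statement.
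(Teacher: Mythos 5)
Your proposal is correct and follows essentially the same route as the paper: identify $\mathcal T$ with $(\mathcal A\oplus\mathcal B)\ltimes\M$, observe that $T=0$ and $q\A p=0$ so the $2$-torsion hypotheses of Theorem \ref{main1} are dispensable, use the direct-sum fact for the Lie derivation property of $\mathcal A\oplus\mathcal B$, and translate condition (II) through the corner isomorphisms $p\A p\cong\mathcal A$, $q\A q\cong\mathcal B$. This is exactly the chain of observations the paper records before deducing the corollary as an immediate consequence of Theorem \ref{main1}.
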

It should be remarked  here that, in \cite{C2}, Cheung   combined his hypotheses  with some ``faithfulness'' conditions. Combining the conditions ``$\M$ is faithful as a left $\mathcal A-$module" and ``$\M$ is faithful as a right $\B-$module" with those in the above corollary provides some more sufficient conditions ensuring the Lie derivation property for the triangular algebra ${\rm Tri}(\mathcal{A}, \M, \B).$ His results can be satisfactorily extended to a trivial extension algebra $\A\ltimes\M$ by employing  the hypothesis  ``$\M$ is loyal" instead of ``$\M$ is faithful''.

We recall that, in the case where a unital algebra $\A$ has a nontrivial idempotent $p$,  an $\A-$bimodule $\M$ is said to be left loyal if
$a\M=0$ implies that $pap=0$, right loyal if $\M a=0$ implies that $qaq=0$, and it is called loyal if it is both left and right loyal.

Note that for a triangular algebra $(\mathcal A\oplus\B)\ltimes\M$, the  loyalty of $\M$ is nothing but the faithfulness of $\M$ as an $(\mathcal A,\B)$-module in the sense of Cheung \cite{C2}. Combining ``the loyalty of $\M$" with the current hypotheses of Theorem \ref{main1} provides some more sufficient conditions seeking the Lie derivation property for a trivial extension algebra  $\A\ltimes\M$. In the case where  $\M$ is a loyal $\A-$module,  the existence of an isomorphism between $\pi_{p\A p}(Z(\A\ltimes\M))$ and $\pi_{q\A q}(Z(\A\ltimes\M))$ is the  key tool. Indeed, it can be shown that,  there exists  a unique algebra isomorphism
$\tau:\pi_{p\A p}(Z(\A\ltimes\M))\longrightarrow \pi_{q\A q}(Z(\A\ltimes\M))$
 satisfying $papx=x\tau(pap)$ for all $a\in\A, x\in\M$; (see {\cite[Proposition 3]{C2}} in the setting of triangular algebra).

\end{document}